\newtheorem{formula}{}[section]
\newtheorem{definition}[formula]{Definition}
\newtheorem{corollary}[formula]{Corollary}
\newtheorem{lemma}[formula]{Lemma}
\newtheorem{theorem}[formula]{Theorem}
\theoremstyle{definition}
\newtheorem{example}[formula]{Example}
\newtheorem{remark}[formula]{Remark}
\newtheorem*{notation}{Notation}
\def\thrm{\begin{theorem}}
\def\thrml#1{\begin{theorem}\label{#1}}
\def\ethrm{\end{theorem}}
\def\dfntn{\begin{definition}}
\def\dfntnl#1{\begin{definition}\label{#1}}
\def\edfntn{\end{definition}}
\def\nmrt{\begin{enumerate}}
\def\enmrt{\end{enumerate}}
\def\tm#1{\item[{\rm (#1)}]}
\def\qtn{\begin{equation}}
\def\qtnl#1{\begin{equation}\label{#1}}
\def\eqtn{\end{equation}}
\def\lmm{\begin{lemma}}
\def\lmml#1{\begin{lemma}\label{#1}}
\def\elmm{\end{lemma}}
\def\crllr{\begin{corollary}}
\def\crllrl#1{\begin{corollary}\label{#1}}
\def\ecrllr{\end{corollary}}
\def\rmrk{\begin{remark}}
\def\rmrkl#1{\begin{remark}\label{#1}}
\def\ermrk{\end{remark}}
\def\css{\begin{cases}}
\def\ecss{\end{cases}}
\newcommand{\C}{\mathbb{C}}
\newcommand{\F}{\mathbb{F}}
\newcommand{\prm}{{\cal P}}
\newcommand{\Q}{\mathbb{Q}}
\newcommand{\Z}{\mathbb{Z}}
\DeclareMathOperator{\Fr}{Fr}
\DeclareMathOperator{\im}{Im}
\DeclareMathOperator{\Mat}{Mat}
\DeclareMathOperator{\rk}{rk}
\DeclareMathOperator{\sym}{Sym}
\def\lg{\langle}
\def\ov{\overline}
\def\rg{\rangle}
\def\myskip{\medskip}
\begin{document}

\title{A modular absolute bound condition\\ for primitive association schemes}
\author{
Akihide Hanaki\\
\small Faculty of Science, Shinshu University\\
\small Matsumoto, 390-8621, Japan\\
\small E-mail address: hanaki@math.shinshu-u.ac.jp
\and
Ilia Ponomarenko
\thanks{Partially supported by RFBR grants 05-01-00899, 07-01-00485 and NSH-4329.2006.1}\\
\small Petersburg Department of V.A.Steklov\\
\small Institute of Mathematics, 191023, Russia\\
\small E-mail address: inp@pdmi.ras.ru
}
\date{September 28, 2007}
\maketitle

\begin{abstract}
The well-known absolute bound condition for a primitive symmetric association scheme $(X,S)$
gives an upper bound for $|X|$ in terms of $|S|$ and the minimal non-principal
multiplicity of the scheme. In this paper we prove another upper bounds for $|X|$ for an
arbitrary primitive scheme $(X,S)$. They do not depend on $|S|$ but depend on some invariants
of its adjacency algebra $KS$ where $K$ is an algebraic number field or a finite field.
\end{abstract}

\section{Introduction}
Let $(X,S)$ be an association scheme (for a background on association scheme theory
we refer to \cite{BI84,Z05} and Appendix).
Denote by $F S$ its {\em adjacency algebra}  over a field~$F$. As usual we
consider $F S$ as a subalgebra of the full matrix algebra $\Mat_X(F)$. Set
$$
\rk_{\min}(F,S)=\min_{A\in F S\setminus F J}\rk(A)
$$
where $J$ is the all-one matrix in $F S$ and $\rk(A)$ is the rank of a matrix $A$.
One can see that in the commutative case the number $\rk_{\min}(\C,S)$ coincides with
the minimal multiplicity $m_{\min}$ of a non-principal irreducible representation of
the algebra $\C S$ (see~(\ref{150807a})).
\myskip

It is a well-known fact (see \cite[Theorem~4.9]{BI84}) that
given a primitive symmetric scheme $(X,S)$ the number $|X|$ can not be arbitrarily
large when $|S|$ and $m_{\min}$ are bounded. It was asked there about a
reasonable absolute bound condition for an arbitrary primitive commutative scheme. The main goal
of this paper is to use the modular representation theory for schemes to get another upper bound
for $|X|$ without the assumption of commutativity. Our first result gives the following
{\em modular absolute bound condition} for primitive schemes.

\begin{theorem}\label{180707b}
Let $(X,S)$ be a primitive scheme
and let $q$ be a prime power.
Set $r=\rk_{\min}(\F_q,S)$.
Then
$$
|X|\le\frac{q^{r}-1}{q-1}
$$
whenever $r>1$.
If $r=1$, then $|X|<q$ and $(X,S)$ is a thin scheme of prime order.
\end{theorem}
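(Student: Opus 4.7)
The plan is, for $r > 1$, to exhibit an injection $X \hookrightarrow \mathbb{P}^{r-1}(\F_q)$ coming from the rows of a witness $A \in \F_q S \setminus \F_q J$ of minimum rank $r$; this immediately yields $|X| \le (q^r - 1)/(q-1)$. The $r = 1$ case I would handle separately, extracting from a rank-one witness a prime-order multiplicative character of the scheme that identifies $(X,S)$ as thin of prime order.

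Fix $A = \sum_{s \in S} c_s A_s \in \F_q S \setminus \F_q J$ with $\rk(A) = r$, and let $u_x := e_x^T A$ denote the $x$-th row of $A$. First I would observe that no row $u_x$ is zero: if $u_x = 0$, then for every $s \in S$ one can pick $y$ with $(x,y) \in s$ (valencies are positive) and read off $c_s = 0$, forcing $A = 0$. The central observation is the following relation-invariance property: for every $\lambda \in \F_q^*$ and every $s \in S$, the equality $u_x = \lambda u_y$ holds either for all pairs $(x,y) \in s$ or for none of them. Indeed, $u_x = \lambda u_y$ unfolds to $c_{\mathrm{rel}(x,z)} = \lambda c_{\mathrm{rel}(y,z)}$ for all $z \in X$, where $\mathrm{rel}(x,z)$ denotes the unique basis relation containing $(x,z)$, and the set of pairs $(t_1,t_2) \in S \times S$ realized as $(\mathrm{rel}(x,z),\mathrm{rel}(y,z))$ as $z$ varies over $X$ is controlled by the intersection numbers $p^{s}_{t_1, t_2^*}$, which depend only on $s$.

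With this in hand, define $R := \{(x,y) \in X \times X : u_x = \lambda u_y \text{ for some } \lambda \in \F_q^*\}$. Non-vanishing of rows makes $R$ reflexive, symmetric, and transitive, and the relation-invariance property just stated shows that $R$ is a union of basis relations; thus $R$ corresponds to a closed subset of $S$, and primitivity forces $R$ to be either the diagonal or the whole of $X \times X$. In the first case the map $x \mapsto \F_q^* u_x$ is a well-defined injection of $X$ into the projective space of the $r$-dimensional row space of $A$, which has exactly $(q^r-1)/(q-1)$ points, and the bound is proved. In the second case all rows are scalar multiples of a single vector, so $\rk(A) = 1$, contradicting $r > 1$.

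For $r = 1$, I would write a witness as $A = \alpha \beta^T$ with $\alpha, \beta \in \F_q^X$. Non-vanishing of rows and columns, combined with $\alpha_x \beta_x = c_e$, forces $c_e \ne 0$ and $\alpha_x, \beta_x \in \F_q^*$, so $A_{x,y} = c_e\alpha_x/\alpha_y$. Setting $\chi(s) := c_s/c_e$ defines a map $\chi : S \to \F_q^*$ with $\alpha_x/\alpha_y = \chi(s)$ for $(x,y) \in s$, and chasing a path $x \to y \to z$ gives $\chi(w) = \chi(s)\chi(t)$ whenever $p^w_{s,t} > 0$. Consequently, for each subgroup $H \le \F_q^*$ the preimage $\chi^{-1}(H)$ is a closed subset of $S$, and primitivity then forces $\chi(S)$ to be a nontrivial subgroup of $\F_q^*$ with no proper nontrivial subgroups, hence cyclic of prime order $p$. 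The identity $p^e_{s^*, s'} = \delta_{s,s'}\,n_s$ forces $\chi$, and in turn $\alpha$, to be injective; so $|S| = |X| = p$, every valency equals $1$, and $|X| \le q-1 < q$, making $(X,S)$ thin of prime order. The main obstacle is the relation-invariance observation in the second paragraph: it is what translates the algebraic hypothesis on $A$ into the combinatorial language of closed subsets and lets the primitivity assumption take hold; once it is in place, the rest is a short pigeonhole and character-theoretic finish.
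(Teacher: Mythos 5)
Your argument is correct, and it follows the same overall skeleton as the paper's proof of Theorem~\ref{180707b} --- take a witness $A$ of minimal rank, show that proportionality of its rows (columns, in the paper) defines an equivalence relation lying in $S^*$, invoke primitivity, and count points of $\mathbb{P}^{r-1}(\F_q)$ --- but it reaches the two key ingredients by genuinely different routes. The paper's crucial Theorem~\ref{110707c} (each $e_\lambda(A)$ is a union of basis relations) is proved there first over $\C$, using the Hermitian product $A^*A$, the Cauchy--Schwarz inequality and closure of $\C S$ under the Hadamard operation $A\mapsto A^{(a)}$, and is then transported to an arbitrary field by re-encoding the finite multiplicative data of the entries inside $\C$. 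Your ``relation-invariance'' observation proves the same statement directly over $\F_q$: the set of pairs $\bigl(\mathrm{rel}(x,z),\mathrm{rel}(y,z)\bigr)$, $z\in X$, equals $\{(t_1,t_2):\ c_{t_1,t_2^*}^s>0\}$ and hence depends only on $s=\mathrm{rel}(x,y)$, so the condition $u_x=\lambda u_y$ is constant on each basis relation. This is shorter, purely combinatorial, and avoids the transfer step entirely. For $r=1$ the paper's Theorem~\ref{200707b} again uses $e_\lambda(A)\in S^*$ to show that each $e_{\lambda_x}(A)$ is a basis relation of valency $1$; your route through the multiplicative character $\chi(s)=c_s/c_\Delta$ and the closed subsets $\chi^{-1}(H)$ reaches the same conclusion and in fact pins down $|X|=|S|=p$ with $p\mid q-1$, which gives $|X|<q$ directly. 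Both treatments are sound; yours is arguably the more self-contained, at the cost of redoing for the special relation $R$ some facts (closedness, the equivalence-relation structure) that the paper gets uniformly from Theorem~\ref{110707c}.
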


We have examples for which the equality holds in Theorem \ref{180707b}.

\begin{example}
Let $(X,S)$ be the cyclotomic scheme over a prime field $\F_p$ corresponding to
its multiplicative subgroup of order $r$.
Suppose that there exists a prime $q$ such that $p=(q^r-1)/(q-1)$.
Then $\rk_{\min}(\F_q,S)=r$ and the equality in Theorem \ref{180707b} holds.
We omit the proof of this fact, but one can easily check it for
$(p,r,q)=(31,5,2)$ or $(31,3,5)$.
\end{example}

Given  a scheme $(X,S)$ denote by $\prm=\prm(\C S)$ the set of all
central primitive idempotents of the algebra $\C S$.
For $P\in\prm$ set $m_P$ to be the multiplicity of the irreducible
representation of $\C S$ corresponding to $P$ in the standard
representation (in $\Mat_X(\C)$).
Put
\qtnl{150807a}
m_{\min}=\min_{P\in\prm\setminus\{P_0\}}m_P
\eqtn
where $P_0=(1/|X|)J$ is the principal idempotent of~$\C S$.
If $(X,S)$ is primitive and $m_{\min}=1$,
then it is a thin scheme of prime order (Theorem~\ref{200707b}).
So we may avoid this case. Set
$$
\Q(X,S)=\Q(\{P_{x,y}:\, P\in\prm,\ x,y\in X\}).
$$
It is not necessary a splitting field of $\Q S$, but it is a Galois extension and every
$P\in\prm$ belongs to the adjacency algebra $\Q(X,S)S$.

\begin{theorem}\label{160707a}
Let $(X,S)$ be a primitive scheme. Suppose that $p$ is a prime which does not
divide the Frame number of this scheme and $m_{\min}>1$. Then
\qtnl{170707b}
|X|\le \frac{q^{m_{\min}}-1}{q-1}.
\eqtn
where $q=p^{|\Q(X,S):\Q|}$.
\end{theorem}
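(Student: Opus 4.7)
The strategy is to derive Theorem \ref{160707a} from Theorem \ref{180707b} by establishing the inequality $\rk_{\min}(\F_q, S) \le m_{\min}$ under the given hypotheses, and then applying the modular absolute bound directly.

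First, I would fix a central primitive idempotent $P \in \prm$ achieving $m_P = m_{\min}$. By the discussion preceding the theorem, $P$ has entries in $K := \Q(X,S)$, and $K/\Q$ is Galois. I would then pick a prime ideal $\mathfrak{p}$ of the ring of integers $\mathcal{O}_K$ lying over $p$, with residue field $k = \mathcal{O}_K/\mathfrak{p}$. Since $K/\Q$ is Galois, the residue degree satisfies $[k:\F_p] \mid [K:\Q]$, so $k$ embeds into $\F_q$.

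The key input is the standard fact from the modular representation theory of adjacency algebras: when $p$ is coprime to the Frame number, every $P \in \prm$ is $\mathfrak{p}$-integral, and the reduction $\bar P \in kS$ is a nonzero central idempotent in the (semisimple) modular algebra $kS$. The nonvanishing can also be seen from $\mathrm{tr}(\bar P) \equiv m_P \pmod{p}$, which is nonzero in $\F_q$ because $m_P$ divides the Frame number. Next, I would bound the rank of $\bar P$: since $P$ has rank $m_P$ over $K$, every $(m_P+1)\times(m_P+1)$ minor vanishes over $K$, hence over $k$, giving $\rk(\bar P) \le m_P = m_{\min}$. To verify $\bar P \notin \F_q J$, I would use orthogonality of central idempotents: $P P_0 = 0$ in $KS$ reduces to $\bar P \,\bar P_0 = 0$ in $\F_q S$, where $\bar P_0 = J/|X|$ (note $p\nmid|X|$ by the Frame number hypothesis); combined with $\bar P \ne 0$ and the fact that the only nonzero idempotent in $\F_q J$ is $\bar P_0$, this rules out $\bar P \in \F_q J$.

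Hence $\bar P \in \F_q S \setminus \F_q J$ and $\rk_{\min}(\F_q, S) \le m_{\min}$. The assumption $m_{\min} > 1$ precludes $(X,S)$ from being a thin scheme of prime order (where all multiplicities equal $1$), so by Theorem \ref{180707b} we cannot have $\rk_{\min}(\F_q, S) = 1$; thus $r := \rk_{\min}(\F_q, S) \ge 2$, and Theorem \ref{180707b} yields $|X| \le (q^r - 1)/(q-1)$. Since $r \mapsto (q^r-1)/(q-1) = 1 + q + \cdots + q^{r-1}$ is increasing in $r$, we obtain $|X| \le (q^{m_{\min}}-1)/(q-1)$.

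The main obstacle is justifying the $\mathfrak{p}$-integrality of $P$ and the nonvanishing of its reduction $\bar P$: this is precisely where the coprimality with the Frame number is essential, and it should be handled by appealing to the modular decomposition theorems for adjacency algebras (semisimplicity and lifting of central primitive idempotents in the good-prime case). The remaining verifications — the minor bound on rank, the orthogonality argument to exclude $\bar P \in \F_q J$, and the monotonicity of the bound — are routine once the integral reduction is in place.
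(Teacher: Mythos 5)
Your reduction-of-idempotents framework matches the paper's setup (integrality of $P$ at a prime above $p$, semisimplicity of the modular algebra via the Frame number, and then feeding a low-rank element of $\F_qS\setminus\F_qJ$ into Theorem~\ref{180707b}), but there is a genuine gap at the central step. You assert that ``$P$ has rank $m_P$ over $K$'' and deduce $\rk(\overline{P})\le m_P$ from the vanishing of minors. This is only true for commutative (more precisely, for $n_P=1$) schemes: by definition (\ref{190907}) the matrix rank of a central primitive idempotent is $\rk(P)=m_Pn_P$, since $P$ projects onto the whole isotypic component of the standard module, which has dimension $m_Pn_P$. The theorem is stated for arbitrary primitive schemes --- dropping commutativity is the point of the paper --- and when $n_P>1$ your argument only yields $\rk_{\min}(\F_q,S)\le m_Pn_P$, hence the weaker bound $|X|\le(q^{m_Pn_P}-1)/(q-1)$, not (\ref{170707b}).

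The missing idea is that one must descend from the central primitive idempotent $\overline{P}$ (rank $m_Pn_P$) to a \emph{primitive} idempotent inside its block. This is what the paper's Lemmas~\ref{ha002} and~\ref{ha003} provide: one shows that $\overline{P}(\F_qS)\cong\Mat_{n_P}(\F_q)$ with $\F_q$ itself as the coefficient field (a Wedderburn plus separability/tensor argument ruling out a proper extension $F\supset\F_q$), and that degree and multiplicity are preserved under reduction; then a diagonal matrix unit $E\in\overline{P}(\F_qS)$ has $\rk(E)=m_P$ in the standard module, and $E\notin\F_qJ$ because $m_P>1$. None of this absolute-irreducibility step appears in your proposal, and without it the rank bound fails. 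A secondary quibble: your alternative nonvanishing argument via $\mathrm{tr}(\overline{P})\equiv m_P$ is also off --- $\mathrm{tr}(P)=m_Pn_P$, and ``$m_P$ divides the Frame number'' is unsupported (the Frame number has $m_P^{n_P^2}$ in the \emph{denominator}); the correct route is the valuation argument of Lemma~\ref{ha001}, which gives $\nu_{\mathfrak P}(P)=0$ and hence $\overline{P}\ne0$ directly. The rest of your outline (choice of the prime above $p$, embedding the residue field in $\F_q$, exclusion of $\rk_{\min}=1$ via Theorem~\ref{200707b}, monotonicity in $r$) is sound.
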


\begin{remark}
The proof of Theorem~\ref{160707a} given in Section~\ref{180707c} shows that the upper bound
in (\ref{170707b}) can be reduced.
For an appropriate $P\in\prm$,
the bound is
$$|X|\le \frac{q^{m_P}-1}{q-1}$$
where $q=|\Q(\{P_{x,y}:\, x,y\in X\}):\Q|$.
\end{remark}

We do not know any primitive scheme for which the upper bound in (\ref{170707b}) is tight.
However, there are examples where this bound is less than one given by the absolute bound condition
(e.g. for some amorphic primitive schemes $(X,S)$ such that $|X|=q^2$ and $|S|=(q+1)/2$
where $q$ is a prime and $q\not\equiv 1\,\pmod{3}$; in this case $\Q(X,S)=\Q$ and one can take $p=3$).
On the other hand, one can use inequality (\ref{170707b}) to prove the finiteness of some
classes of {\it rational} primitive schemes (here a scheme is called rational if $\Q$ is a
splitting field of its adjacency algebra). In this case, we can apply
Theorem~\ref{160707a} to any primitive {\it $p'$-scheme}. By the definition for such a scheme
the prime $p$ does not divide neither $|X|$ nor the valency of an element from $S$.

\begin{corollary}\label{170707a}
Given a prime $p$ and a positive integer $r$ the set of rational primitive $p'$-schemes
for which $m_{\min}\le r$, is finite.
\end{corollary}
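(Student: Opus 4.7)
The plan is to show that every rational primitive $p'$-scheme $(X,S)$ with $m_{\min}\le r$ satisfies a uniform upper bound on $|X|$ depending only on $p$ and $r$; once this is done, finiteness follows at once from the trivial observation that there are only finitely many association schemes on a given finite vertex set.

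The main tool will be Theorem~\ref{160707a}, applied with the given prime $p$. Two hypotheses have to be checked. First, the rationality of $(X,S)$ means that $\Q$ is already a splitting field of $\Q S$, so every central primitive idempotent $P\in\prm$ lies in $\Q S$; in particular each entry $P_{x,y}$ is rational, hence $\Q(X,S)=\Q$ and the parameter $q=p^{|\Q(X,S):\Q|}$ of Theorem~\ref{160707a} collapses to $q=p$. Second, I have to verify that $p\nmid\Fr(X,S)$; this is where the $p'$-hypothesis enters. The Frame number is a positive integer whose standard expression has only $|X|$ and the valencies $n_s$ in its numerator, so the assumptions $p\nmid|X|$ and $p\nmid n_s$ for every $s\in S$ together imply $p\nmid\Fr(X,S)$. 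This is the one point where both parts of the $p'$-condition must be combined, and it is the main obstacle to verify carefully.

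With the hypotheses of Theorem~\ref{160707a} in hand, I would distinguish two cases. If $m_{\min}>1$, the theorem immediately yields
$$|X|\le\frac{p^{m_{\min}}-1}{p-1}\le\frac{p^r-1}{p-1},$$
which is the desired bound. If instead $m_{\min}=1$, Theorem~\ref{200707b} forces $(X,S)$ to be a thin scheme of some prime order $n$, that is, the group scheme of $\Z/n\Z$; rationality then demands $\Q(\zeta_n)=\Q$, so $n\le 2$ and in particular $|X|\le 2$. In both cases $|X|$ is bounded by a function of $p$ and $r$ alone, and the corollary follows.
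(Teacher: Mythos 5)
Your proposal is correct and follows exactly the route the paper intends (the corollary is left as an immediate consequence of Theorem~\ref{160707a}): rationality gives $\Q(X,S)=\Q$, hence $q=p$; the $p'$-condition gives $p\nmid\Fr(X,S)$ because $\Fr(X,S)$ is an integer dividing $|X|^{|S|}\prod_{s\in S}d_s$; and Theorem~\ref{160707a} then bounds $|X|$ by $(p^r-1)/(p-1)$, with the $m_{\min}=1$ case disposed of via Theorem~\ref{200707b} just as the paper indicates. Your extra observation that rationality forces a thin scheme of prime order to have order $2$ is a correct and complete way to close that edge case, so nothing is missing.
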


The class of $p'$-schemes with $p=2$ consists of {\it odd} schemes, i.e. those for which
only symmetric basis relation of it is a reflexive one.
Theorem~\ref{160707a} shows that for a fixed $r$ a splitting field of an odd primitive
scheme with $m_{\min}\le r$ grows when $|X|$ grows.
\myskip

The assumption in Theorem~\ref{160707a} requires that the adjacency algebra
of $(X,S)$ over a field of characteristic $p$ is semisimple.
Non-semisimple case seems to be
much more difficult (see~\cite{HY05,P02}).
\myskip

The proofs of Theorems~\ref{180707b}
and \ref{160707a} are given in Sections~\ref{220707a} and \ref{180707c} respectively.
To make the paper self-contained
we put in Appendix the notations and definitions concerning schemes
and their adjacency algebras.

\begin{notation}
As usual by $\Q$, $\C$ and $\F_q$ we denote the fields of rational and complex numbers
and a finite field with $q$ elements respectively.
Throughout the paper $X$ denotes a finite set. The diagonal of the set $X^2$ is denoted
by~$\Delta$. 
The algebra of all matrices whose entries belong to a field $F$ and whose rows and columns
are indexed by the elements of $X$ is denoted by $\Mat_X(F)$, the identity
matrix by $I$ and the all-one matrix by~$J.$
Given $A\in\Mat_X(F)$ and $x,y\in X$, we denote by $A_{x,y}$ the $(x,y)$-entry of $A$.
The Hadamard (componentwise) product of matrices $A,B\in\Mat_X(F)$ is denoted by $A\circ B$.
The adjacency matrix of a binary relation $r\subset X^2$ is denoted by $A_r$ (this is a
\{0,1\}-matrix of $\Mat_X(F)$ such that $(A_r)_{x,y}=1$ if oand only if $(x,y)\in r$).
The left standard module of the algebra $\Mat_X(F)$ is denoted by $F X$. We will identify
$X$ with a subset of $F X$.
\end{notation}


\section{Combinatorics in the adjacency algebra}\label{220707a}

First we prove that with any matrix of the adjacency algebra of a scheme one can
associate some special relations which are unions of basis relations (a
special case of our result also follows from \cite[Lemma~4.1]{EP97}). Namely, let $F$ be
a field. Given a matrix $A\in\Mat_X(F)$ and an element $\lambda\in F$ we define a binary
relation
$$
e_\lambda(A)=\{(x,y)\in X\times X:\ \lambda Ax=Ay\}
$$
on the set $X$. Clearly, $e_1(A)$ is a nonempty equivalence relation on $X$ and
$e_\lambda(A)\cap e_\mu(A)=\emptyset$ for all nonzero elements $\lambda\ne\mu$.
Besides, $e_0(A)=\emptyset$ if and only if the matrix $A$ has no zero columns.
In the latter case, the relation
\qtnl{150807e}
e(A)=\bigcup_{\lambda\in F}e_{\lambda}(A)
\eqtn
is also an equivalence relation on $X$. Note that $Ax$ being the $x$th column of the matrix $A$
can be considered as an element of $FX$. So $(x,y)\in e(A)$ if and only if the vectors
$Ax,Ay\in FX$ are linearly dependent.
\myskip

In \cite[Lemma~4.1]{EP97} it was proved that given a scheme $(X,S)$ and a matrix $A\in\C S$
the relation $e_\lambda(A)$ with $\lambda=1$ belongs to the set $S^*$ of all unions of
relations from~$S$. The following statement generalizes this result for an arbitrary field
and all $\lambda$'s. Below we denote by $A_e$ and $A_\lambda$ the adjacency matrices of the
relations $e(A)$ and $e_{\lambda}(A)$ respectively. In the first part of the proof we
follow to \cite[Lemma~1.42]{E05}.

\begin{theorem}\label{110707c}
Let $(X,S)$ be a scheme and let $F$ be a field. Then $e_\lambda(A)\in S^*$ for all $A\in FS$
and $\lambda\in F$.
\end{theorem}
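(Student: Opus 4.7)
The strategy is to reformulate the condition $(x,y)\in e_\lambda(A)$ in terms of the coefficients of $A$ in the basis $\{A_s:s\in S\}$, and then to show that the resulting condition depends only on the basis relation containing $(x,y)$. This is exactly what is needed: a relation lies in $S^*$ iff each basis relation of $S$ is either contained in it or disjoint from it.

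Write $A=\sum_{s\in S}a_sA_s$ with $a_s\in F$, and for a pair $(u,v)\in X\times X$ let $r(u,v)\in S$ denote the unique basis relation containing it. Then $A_{z,u}=a_{r(z,u)}$ for every $z,u\in X$, and the definition of $e_\lambda(A)$ becomes
$$
(x,y)\in e_\lambda(A)\;\Longleftrightarrow\; \lambda\,a_{r(z,x)}=a_{r(z,y)}\text{ for every }z\in X.
$$
Thus the membership of $(x,y)$ in $e_\lambda(A)$ is controlled by the family of pairs $\{(r(z,x),r(z,y)):z\in X\}\subset S\times S$ together with the scalars $(a_s)$.

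The key step is to show that this family depends only on $s=r(x,y)$. For a given pair $(s_1,s_2)\in S\times S$, the number of $z\in X$ with $(z,x)\in s_1$ and $(z,y)\in s_2$ coincides, after rewriting $(z,x)\in s_1$ as $(x,z)\in s_1^*$, with the intersection number $p^{s}_{s_1^*,s_2}$. By the defining axioms of an association scheme this quantity depends only on $s$ and not on the choice of $(x,y)\in s$; in particular the set
$$
\Pi(s)=\{(s_1,s_2)\in S\times S:p^{s}_{s_1^*,s_2}\neq 0\}
$$
is an invariant of $s$.

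Combining the two observations, the condition $(x,y)\in e_\lambda(A)$ is equivalent to requiring $\lambda a_{s_1}=a_{s_2}$ for every $(s_1,s_2)\in\Pi(s)$, and this is a property of $s$ alone. Hence every basis relation is either contained in $e_\lambda(A)$ or disjoint from it, so $e_\lambda(A)\in S^*$. The only real content is the recognition that the intersection-number axiom packages precisely the scheme-theoretic information needed; once this is in place the argument is uniform in $\lambda$ (including $\lambda=0$, where the condition degenerates to one depending on $y$ only) and in $F$ (since $\Pi(s)$ is defined by nonvanishing of integers and is independent of the field).
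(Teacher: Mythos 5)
Your proof is correct, and it takes a genuinely different and more elementary route than the paper's. Writing $A=\sum_{s\in S}a_sA_s$, you observe that $(x,y)\in e_\lambda(A)$ iff $\lambda a_{r(z,x)}=a_{r(z,y)}$ for every $z\in X$, and that the set of pairs $(r(z,x),r(z,y))$ realized by some $z$ is exactly $\{(s_1,s_2):\,c_{s_1^*,s_2}^{\,s}\neq 0\}$ with $s=r(x,y)$; by the defining axiom (\ref{240707a}) this set depends only on $s$, and, as you rightly note, the nonvanishing is a condition on nonnegative integers, hence independent of $F$. So membership of $(x,y)$ in $e_\lambda(A)$ depends only on the basis relation containing it, which is precisely $e_\lambda(A)\in S^*$, uniformly in $\lambda$ (including $\lambda=0$) and in the field. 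The paper proceeds quite differently: for $F=\C$ it uses $A^*A\in\C S$, the Cauchy--Schwarz inequality and the Hadamard-closure property (\ref{240707b}) to show first that $A_e\in\C S$ and then that each $A_\lambda\in\C S$; for arbitrary $F$ it reduces to the complex case by noting that $\Lambda=\{\lambda:e_\lambda(A)\neq\emptyset\}$ is a finite, hence cyclic, subgroup of $F^\times$ acting on the set of entries of $A$, and transporting this action equivariantly into $\C$ via an injection of entries and a monomorphism $\Lambda\to\C^\times$. Your argument avoids both the analytic step and this transfer trick, works verbatim over any field, and gives $e(A)\in S^*$ as an immediate corollary since $S^*$ is closed under unions; what the paper's longer route yields in addition is only the structural by-products of the complex computation (such as the expression of $(A^*A)\circ A_e$ as $d\sum_\lambda\overline{\lambda}A_\lambda$), none of which is needed for the statement itself.
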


\begin{proof}
Without loss of generality we assume that $A\ne 0$. First we suppose that $F=\C$. Since
$A\in\C S$, we also have $A^*\in\C S$ where $A^*$ is the Hermitian conjugate of~$A$. This
implies that $A^*A\in\C S$. So given $x\in X$ the number $(A^*A)_{x,x}$ equals to the
coefficient of the identity matrix $I=A_\Delta$ in the decomposition of the matrix $A^*A$ by the matrices $A_s$, $s\in S$.
Denote it by~$d$. Then by the Cauchy-Schwartz inequality we conclude that
\qtnl{190707a}
|(A^*A)_{x,y}|=|\lg Ax,Ay\rg|\le\|Ax\|\cdot\|Ay\|=d
\eqtn
where $\lg\ \cdot\ ,\ \cdot\ \rg$ and $\|\cdot\|$ are the inner product and the Euclidean norm
in $\C X$ respectively.
Moreover, the equality
in (\ref{190707a}) is attained if and only if the vectors $Ax$ and $Ay$ are linearly
dependent. Thus $|(A^*A)_{x,y}|=d$ if and only if $(x,y)\in e(A)$.
Due to (\ref{240707b}) this shows that $A_e\in\C S$ and so $e(A)\in S^*$.
On the other hand, given $(x,y)\in e_\lambda(A)$ the number
$$
(A^*A)_{x,y}=\lg Ax,Ay\rg=\lg Ax,\lambda Ax\rg=\ov\lambda\lg Ax,Ax\rg=\ov\lambda d.
$$
does not depend on $(x,y)$. By (\ref{150807e}) this means that
$$
(A^*A)\circ A_e=d\sum_{\lambda\in\Lambda}\ov\lambda A_\lambda
$$
where $\Lambda=\{\lambda\in\F:\ e_\lambda(A)\ne\emptyset\}$. Since the matrices $A^*A$
and $A_e$ belong to $\C S$,
we conclude by (\ref{240707b}) that $A_\lambda\in\C S$ and hence $e_\lambda(A)\in S^*$.
\myskip

Let $F$ be an arbitrary field. Since $A\in FS$, any two columns of~$A$ consist of the
same elements of~$F$. Denote the set of all of them by~$M$. Then
\qtnl{150807f}
M\lambda=M,\quad\lambda\in\Lambda,
\eqtn
where $\Lambda$ is as above.
Easily we can see that $\Lambda$ is a finite subgroup of the multiplicative group $F^\times$,
and so $\Lambda$ is cyclic.
Take an injection and a group monomorphism
$$
f:M\to\C,\ \mu\mapsto\mu',\qquad\varphi:\Lambda\to\C^\times,\ \lambda\mapsto\lambda'
$$
such that the permutation groups induced by the actions of $\Lambda$ on $M$, and
of $\Lambda'=\im(\varphi)$ on $M'=\im(f)$ are equivalent. Then it is easy to see that
$$
\lambda Ax=Ay\ \Leftrightarrow\ \lambda' A'x=A'y,\qquad x,y\in X,
$$
where $A'\in\Mat_X(\C)$ is the complex matrix with entries $A'_{x,y}=(A_{x,y})^f$
for all~$x,y$. So $e(A)=e(A')$ and we are done by the first part of the proof.
\end{proof}

It was proved in \cite[p.71]{W76} that any primitive scheme having a nonreflexive
basis relation of valency~$1$ is a thin scheme of prime order. The
following theorem gives a ``dual'' version of this result.

\begin{theorem}\label{200707b}
Let $(X,S)$ be a primitive scheme and let $F$ be a field. Then $\rk_{\min}(F,S)=1$ if
and only if $(X,S)$ is a thin scheme of prime order.
\end{theorem}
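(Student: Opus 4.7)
The plan is to prove both directions by using Theorem~\ref{110707c} to extract combinatorial structure from a rank-$1$ matrix. For the forward implication, I take $A\in FS\setminus FJ$ with $\rk(A)=1$. First I would check that $A$ has no zero column: the entries of any element of $FS$ are constant on basis relations, and every basis relation meets every column $X\times\{y\}$, so a zero column would force every coefficient in the expansion $A=\sum_s\alpha_sA_s$ to vanish. Hence every column of $A$ is a nonzero scalar multiple of a fixed vector $v\in FX$, and I can write the $x$th column as $\lambda_xv$ with $\lambda_x\in F^\times$. Under this parametrization, $(x,y)\in e_\lambda(A)$ iff $\lambda_y=\lambda\lambda_x$, so the nonempty $e_\lambda(A)$ with $\lambda\in F^\times$ partition $X^2$, and Theorem~\ref{110707c} places each of them in $S^*$.

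Next I would apply primitivity to the equivalence relation $e_1(A)\in S^*$: it must be $\Delta$ or $X^2$. The case $e_1(A)=X^2$ makes all $\lambda_x$ equal, so all columns of $A$ coincide; then $A_{x,y}$ depends only on $x$, and combined with the ``constant on basis relations'' property this forces every basis relation to carry the same coefficient in $A$, i.e.\ $A\in FJ$---a contradiction. So $e_1(A)=\Delta$ and $x\mapsto\lambda_x$ is injective. For any $s\in S$ and $x\in X$, the relation $s$ lies inside a single $e_\lambda(A)$, so all $y$ with $(x,y)\in s$ share the value $\lambda_y=\lambda\lambda_x$; injectivity then forces a unique such $y$, giving $n_s=1$ for every $s$. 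Hence $(X,S)$ is thin.

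With $(X,S)$ thin, $X$ carries a group structure $G$ whose elements index the basis relations, and the rule $\chi(g)=\lambda_{gx}/\lambda_x$ is independent of $x$ and defines an injective character $G\hookrightarrow F^\times$; hence $G$ is cyclic, and since the $S^*$-equivalence relations correspond to subgroups of $G$, primitivity forces $|G|$ to be prime. For the converse, once $F$ contains a primitive $|X|$th root of unity (for example $F=\C$), the non-principal primitive idempotents of $FS$ are rank-$1$ matrices outside $FJ$, giving $\rk_{\min}(F,S)=1$. I expect the main obstacle to be the step from injectivity of $x\mapsto\lambda_x$ to the valency-$1$ conclusion, which crucially uses the full strength of Theorem~\ref{110707c}: it is not enough that $e_1(A)\in S^*$; one really needs each individual $e_\lambda(A)$ to lie in $S^*$ so that the basis relations refine the partition given by the scalars $\lambda_\cdot$.
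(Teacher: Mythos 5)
Your proof of the substantive direction ($\rk_{\min}(F,S)=1$ implies thin of prime order) is correct and follows essentially the same route as the paper: take a rank-one $A\in FS\setminus FJ$, use Theorem~\ref{110707c} to put every $e_\lambda(A)$ in $S^*$, invoke primitivity to force $e_1(A)=\Delta$, and then use the fact that each basis relation sits inside a single $e_\lambda(A)$ together with the injectivity of $x\mapsto\lambda_x$ to get valency one throughout. You are in fact more careful than the paper on two points it glosses over: that $A$ has no zero column (so that the scalars $\lambda_x$ are nonzero and the relations $e_\lambda(A)$, $\lambda\in F^\times$, really partition $X^2$), and why $e_1(A)=X^2$ would force $A\in FJ$. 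Your concluding argument that a primitive thin scheme has prime order (via the injective character $G\to F^\times$) is a self-contained substitute for the standard fact the paper merely cites; the detour through cyclicity is not even necessary, since a nontrivial group whose only subgroups are trivial is automatically cyclic of prime order.

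The one place you diverge is the converse, which the paper dismisses as ``clear'' and which you establish only when $F$ contains a primitive $|X|$th root of unity. Your caution is warranted: the unconditional ``if'' direction is in fact false as literally stated. For the thin scheme of prime order $p>2$ and $F=\Q$ one has $\Q S\cong\Q\oplus\Q(\zeta_p)$, every rank-one element of $\Q S$ is a multiple of $J$, and $\rk_{\min}(\Q,S)=p-1$; similarly $\rk_{\min}(\F_p,S)=2$. So the equivalence should be read with $F$ assumed large enough (your root-of-unity hypothesis suffices), or simply as the one-sided implication; only that forward direction is used elsewhere in the paper (in the proof of Theorem~\ref{180707b} and in the discussion following~(\ref{150807a})), so nothing downstream is affected.
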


\begin{proof}
The sufficiency is clear. To prove the necessity suppose that $\rk_{\min}(\F,S)=1$.
Then there exists a rank~$1$ matrix $A\in FS\setminus FJ$. This implies that any
two columns of~$A$ are linear dependent. So $e(A)=X^2$. On the other hand,
$e_1(A)\in S^*$ by Theorem~\ref{110707c}. Due to the primitivity of $(X,S)$ this
implies that $e_1(A)\in\{\Delta,X^2\}$. Moreover, since $A\not\in FJ$, we see
that $e_1(A)=\Delta$. Thus by formula (\ref{150807e}) we conclude that
$$
A=\sum_{x\in X}\lambda_xA_{\lambda_x}
$$
for some $\lambda_x\in F$ such that $\lambda_x\ne\lambda_y$ for all $x\ne y$. So
$e_{\lambda_x}(A)\in S$ and the valency of $e_{\lambda_x}(A)$ equals~$1$ for all
$x\in X$ (see (\ref{240707b})). This shows that the scheme $(X,S)$ is thin. To complete
the proof it suffices to note that any primitive thin scheme is of prime order.
\end{proof}

Now we can prove Theorem \ref{180707b}.

\begin{proof}[Proof of Theorem~\ref{180707b}]
From the hypothesis it follows that there exists a
rank~$r$ matrix $A\in\F_q S\setminus\F_qJ$. By Theorem~\ref{110707c} we know that
$e(A),e_1(A)\in S^*$. Since the scheme $(X,S)$ is primitive,
this implies that
$$
e(A),\ e_1(A)\in\{\Delta,X\times X\}.
$$
However, since $A$ is not a multiple of~$J$, it follows that $e_1(A)=\Delta$, and hence
\qtnl{110707d}
|X|=|\{Ax:\ x\in X\}|.
\eqtn
On the other hand, if $e(A)=X\times X$, then any two vectors $Ax$ and $Ay$ are linearly dependent.
So $r=\rk(A)=1$ and $|\{Ax:\ x\in X\}|\le|F_q^\times|=q-1$. By~(\ref{110707d}) this proves
the second part of the theorem. Thus without loss of generality we can assume that
$e(A)=\Delta$. Then any two distinct
vectors $Ax$ and $Ay$ are linearly independent. This means that $r=\rk(A)>1$ and
$$
|\{Ax:\ x\in X\}|<\frac{q^r-1}{q-1},
$$
and we are done by~(\ref{110707d}).
\end{proof}

\section{Matrix rank in the adjacency algebra}\label{180707c}
In this section we deduce Theorem~\ref{160707a} from Theorem~\ref{180707b}.
To do this, we will consider adjacency algebras over an algebraic number field
and its ring of integers. We refer to \cite{N02} for standard facts from algebraic
number theory. For the rest of the section we fix a scheme $(X,S)$,
an algebraic number field $K$ and a rational prime number~$p$.\myskip

Denote by $R$ the ring of integers of $K$. Take its prime ideal $\mathfrak{P}$ lying above~$p\Z$
and set $f$ to be the degree of~$\mathfrak{P}$. Then
\qtnl{270707a}
f\leq |K:\Q|
\eqtn
and the quotient ring $R/\mathfrak{P}$ is isomorphic to the field~$\F_q$ where $q=p^f$.
Denote by $K_\mathfrak{P}$ and $R_{\mathfrak{P}}$ the $\mathfrak{P}$-adic field and the
ring of $\mathfrak{P}$-adic integers respectively.
Then
\qtnl{270907a}
R_\mathfrak{P}=\{a\in K_\mathfrak{P}: \nu_\mathfrak{P}(a)\geq 0\}
\eqtn
where $\nu_\mathfrak{P}$ is the $\mathfrak{P}$-valuation on $K_\mathfrak{P}$. Here
$\nu_\mathfrak{P}(a)=\infty$ if and only if $a=0$. Since $R_{\mathfrak{P}}/\mathfrak{P}R_{\mathfrak{P}}\cong R/\mathfrak{P}$,
the ring epimorphism $R\to\F_q$ induces
the epimorphism $R_{\mathfrak{P}}\to\F_q, a\mapsto\ov a$, and hence the epimorphism
\qtnl{270907c}
R_\mathfrak{P}S\to\F_qS,\quad\sum_{s\in S}a_s A_s\mapsto\sum_{s\in S}\ov{a_s}A_s
\eqtn
where we use the natural identification of \{0,1\}-matrices in $R_\mathfrak{P}S$ and~$\F_qS$.
The image of $A\in R_\mathfrak{P}S$ is denoted by~$\ov A$.

\begin{lemma}\label{ha001}
Suppose $\F_qS$ is semisimple. Then every central idempotent of $K_\mathfrak{P}S$ belongs to $R_\mathfrak{P}S$.
\end{lemma}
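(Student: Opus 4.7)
The plan is to argue by contradiction, exploiting the $\mathfrak{P}$-adic valuation together with the fact that the center of a semisimple algebra contains no nonzero nilpotents. Let $e=\sum_{s\in S}a_sA_s$ be a central idempotent of $K_\mathfrak{P}S$, and suppose towards a contradiction that some coefficient $a_s$ has negative $\mathfrak{P}$-valuation. I would then set $m=-\min_{s\in S}\nu_\mathfrak{P}(a_s)>0$, pick a uniformizer $\pi$ of $R_\mathfrak{P}$, and form the auxiliary element $b:=\pi^m e=\sum_{s\in S}\pi^m a_s A_s\in R_\mathfrak{P}S$. By the choice of $m$, at least one coefficient $\pi^m a_s$ has $\mathfrak{P}$-valuation exactly zero, so the image $\ov b\in\F_qS$ under the epimorphism (\ref{270907c}) is nonzero.

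The key computation then comes from $e^2=e$: multiplying by $\pi^{2m}$ yields $b^2=\pi^m b$, and reducing modulo $\mathfrak{P}$ (using $\ov{\pi^m}=0$ since $m\ge 1$) gives $\ov b^2=0$. Since $b$ is a $K_\mathfrak{P}$-scalar multiple of the central element $e$, it is central in $R_\mathfrak{P}S$, so $\ov b$ is a central nilpotent element of $\F_qS$.

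Finally I would invoke the hypothesis: because $\F_qS$ is semisimple, Wedderburn--Artin shows that its center is a finite product of fields and so is reduced, forcing $\ov b=0$; this contradicts what the previous paragraph established. I do not foresee any serious obstacle, the whole proof being essentially a single valuation-theoretic trick. The only point that merits attention is that centrality descends from $K_\mathfrak{P}S$ through $R_\mathfrak{P}S$ to $\F_qS$, which is immediate from $b$ being a scalar multiple of the central $e$ and (\ref{270907c}) being a ring homomorphism.
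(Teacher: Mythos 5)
Your proof is correct and follows essentially the same route as the paper: rescale the idempotent by a scalar (your $\pi^m$, the paper's $a$) so that its image in $\F_qS$ is nonzero, observe that the idempotent relation forces the square to reduce to zero, and contradict semisimplicity via a nonzero central nilpotent (the paper phrases this as a nonzero nilpotent ideal $\ov Q(\F_qS)$, you as the reducedness of the center). The differences are purely cosmetic.
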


\begin{proof}
Let $P$ be a central idempotent of $K_\mathfrak{P}S$.
Without loss of generality we assume that $P\ne 0$.
Then due to (\ref{270907a}) it suffices to verify that $\nu(P)=0$
where $\nu=\nu_\mathfrak{P}$ and given an element $A=\sum_{s\in S}a_s A_s$
of the algebra $K_\mathfrak{P}S$ we set
\qtnl{280907a}
\nu(A)=\min_{s\in S}\nu(a_s).
\eqtn
Clearly, $\nu(AB)\geq \nu(A)+\nu(B)$ and $\nu(aA)=\nu(a)+\nu(A)$
for all $A,B\in K_\mathfrak{P}S$ and $a\in K_\mathfrak{P}$. So
$$
\nu(P)=\nu(P^2)\geq\nu(P)+\nu(P)
$$
whence it follows that $\nu(P)\le 0$ (here $\nu(P)<\infty$
because $P\ne 0$). Suppose that $\nu(P)<0$. Set $Q=aP$ where $a$ is an element
of~$K_\mathfrak{P}$ such that $\nu(Q)=\nu(a)+\nu(P)=0$.
Then $\overline{Q}\ne 0$ (see (\ref{270907c})) and
$$
\nu(Q^2)=\nu(a^2P)=
\nu(a)+(\nu(a)+\nu(P))=
\nu(a)=-\nu(Q)>0.
$$
So $\overline{Q^2}=0$. Since $\overline{Q}$ is in the center of the algebra $\F_qS$, the set
$\overline{Q}(\F_qS)$ is a non-zero proper nilpotent ideal of it. However, this contradicts the
assumption that $\F_qS$ is semisimple.
\end{proof}

\begin{remark}
In the proof of Lemma~\ref{ha001} we extended the evaluation $\nu_\mathfrak{P}$ to the adjacency
algebra $K_\mathfrak{P}S$ of a scheme $(X,S)$ (see (\ref{280907a}). This extension $\nu$
has properties: $\nu(A)=\infty$ iff $A=0$, $\nu(A+B)\ge\min(\nu(A),\nu(B))$ and
$\nu(AB)\ge\nu(A)+\nu(B)$.
\end{remark}

Let $P\in\prm(\C S)$ be a central primitive idempotent of $\C S$.
Then every entry of $P$ is an algebraic number.
If the field $K$ contains all entries of $P$,
then $P\in KS$ and $K$ can be embedded into both $\C$ and $K_\mathfrak{P}$. Through
these embedding, we can regard $P$ as an element of $K_\mathfrak{P}S$.

\begin{lemma}\label{ha002}
Suppose $\F_qS$ is semisimple
and the field $K$ contains all entries of
a matrix $P\in\prm(\C S)$.
Then the following statements hold:
\nmrt
\tm{1} $P\in R_\mathfrak{P}S$; in particular, the element $\overline{P}$ is defined and belongs
to $\prm(\F_q S)$,
\tm{2}
$\overline{P}(\F_qS)\cong \Mat_n(\F_q)$ for some $n$,
the irreducible representation of $\F_qS$
defined by $\overline{P}$ is absolutely irreducible, and
the degree and the multiplicity of it in the standard representation of $\F_q S$ coincide
with $n_P$ and $m_P$ respectively (see~(\ref{190907})).
\enmrt
\end{lemma}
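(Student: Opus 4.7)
The plan is to combine Lemma~\ref{ha001} with idempotent lifting for (1), and to handle (2) by a dimension count plus a Galois-theoretic passage to an unramified extension of $K_\mathfrak{P}$. For (1), since all entries of $P$ lie in $K$ we have $P\in KS$, and a short base-extension argument shows that $P$ is already central primitive in $KS$: any decomposition $P=\sum Q_j$ into orthogonal central idempotents of $KS$ would refine in $\C S$ into a decomposition of $P$ into central primitives of $\C S$, forcing just one $Q_j=P$. Hence $KS\cdot P$ is central simple over $K$. Tensoring with $K_\mathfrak{P}$ preserves central simplicity, so $P$ is central primitive in $K_\mathfrak{P} S$ as well, and Lemma~\ref{ha001} places $P$ in $R_\mathfrak{P} S$, giving a well-defined central idempotent $\overline{P}\in\F_q S$. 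To see it is primitive, I would invoke the standard idempotent-lifting correspondence for the complete local ring $R_\mathfrak{P}$: since $\F_q S$ is semisimple, reduction induces an inclusion-preserving bijection between central idempotents of $R_\mathfrak{P} S$ (which by Lemma~\ref{ha001} coincide with those of $K_\mathfrak{P} S$) and those of $\F_q S$. Primitives correspond to primitives, so $\overline{P}\in\prm(\F_q S)$.

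\myskip

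For (2), the Wedderburn factor $\F_q S\cdot\overline{P}$ is a simple $\F_q$-algebra, hence $\cong\Mat_{n'}(\F_{q'})$ for some $n'$ and $q'=q^{f'}$. As $P$ is idempotent, $R_\mathfrak{P} S\cdot P$ is a direct $R_\mathfrak{P}$-summand of $R_\mathfrak{P} S$, and is therefore free of rank $\dim_{K_\mathfrak{P}}(K_\mathfrak{P} S\cdot P)=n_P^2$ (the dimension of a central simple algebra equals the square of its degree). Reducing, $(n')^2 f'=n_P^2$, so everything comes down to showing $f'=1$, which is the main obstacle. The strategy is to pass to the unique unramified extension $L/K_\mathfrak{P}$ of degree $f'$, with ring of integers $R_L$ and residue field $\F_{q'}$. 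On the one hand, central simple algebras survive base change, so $LS\cdot P=L\otimes_{K_\mathfrak{P}}(K_\mathfrak{P} S\cdot P)$ is still central simple over $L$, and $P$ remains central primitive in $LS$. On the other hand, in $\F_{q'} S$ the idempotent $\overline{P}$ splits as the sum of its $f'$ Galois conjugates, because
\[
\F_{q'}\otimes_{\F_q}(\F_q S\cdot\overline{P})\cong\prod_{\sigma\in\Gal(\F_{q'}/\F_q)}\Mat_{n'}(\F_{q'}).
\]
Applying the lifting bijection from (1) now to $R_L S\to\F_{q'} S$, these $f'$ central primitives of $\F_{q'} S$ lift to $f'$ orthogonal central idempotents of $R_L S$ whose sum is $P$; if $f'>1$, this contradicts the primitivity of $P$ in $LS$. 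Hence $f'=1$, and then $n'=n_P$ from the dimension equation. This simultaneously gives absolute irreducibility and identifies the degree.

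\myskip

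Finally, for the multiplicity, $\overline{P}\cdot\F_q X$ is the reduction modulo $\mathfrak{P}$ of the free $R_\mathfrak{P}$-direct summand $P\cdot R_\mathfrak{P} X$ of $R_\mathfrak{P} X$, whose rank equals $\dim_{\C}(P\cdot\C X)=m_P n_P$. Thus $\dim_{\F_q}(\overline{P}\cdot\F_q X)=m_P n_P$, and since the simple module of $\F_q S\cdot\overline{P}\cong\Mat_{n_P}(\F_q)$ has $\F_q$-dimension $n_P$, the multiplicity of the irreducible representation defined by $\overline{P}$ in the standard representation of $\F_q S$ is $m_P n_P/n_P=m_P$, as required.
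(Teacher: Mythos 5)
Your proof is correct, and its overall skeleton matches the paper's: Lemma~\ref{ha001} plus lifting of central idempotents for statement (1), and comparison of the free $R_\mathfrak{P}$-direct summands $P(R_\mathfrak{P}S)$ and $P(R_\mathfrak{P}X)$ with their reductions to get the degree and the multiplicity. The one step you handle genuinely differently is absolute irreducibility. The paper stays entirely on the residue-field side: it asserts that $\overline{P}$ remains primitive over every extension $E$ of $\F_q$ (deduced from the primitivity of $P$ in $\C S$ via Dade), so that $\overline{P}(FS)$ is simple, and then the computation $F\otimes_{\F_q}\Mat_n(F)\cong |F:\F_q|\,\Mat_n(F)$ forces $F=\F_q$. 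You instead first prove that $KS\cdot P$ is central simple over $K$ (the concrete observation $Z(KS\cdot P)\subseteq \C P\cap KS=KP$ is the right one), deduce that $P$ stays central primitive in $LS$ for the unramified extension $L/K_\mathfrak{P}$ of degree $f'$, and use the same tensor computation on the residue side to split $\overline{P}$ into $f'$ Galois conjugates whose lifts contradict that primitivity. This buys a self-contained justification of the ``primitivity survives field extension'' step that the paper only asserts, at the price of invoking the lifting correspondence twice (over $R_\mathfrak{P}$ and over $R_L$). Two small cautions. First, the surjectivity half of your ``standard idempotent-lifting correspondence'' --- lifting a central idempotent of $\F_qS$ to a \emph{central} idempotent of $R_\mathfrak{P}S$ --- is not the naive Hensel lifting: an idempotent lift need not be central, and $Z(R_\mathfrak{P}S)$ need not surject onto $Z(\F_qS)$ a priori; this is precisely what the paper cites Dade's Proposition~1.12 for, and you should cite something equivalent rather than treat it as folklore. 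Second, the parenthetical ``the dimension of a central simple algebra equals the square of its degree'' is circular as stated; the honest reason that $\dim_{K_\mathfrak{P}}(K_\mathfrak{P}S\cdot P)=n_P^2$ is that $\C\otimes_K(KS\cdot P)=\C S\cdot P\cong\Mat_{n_P}(\C)$.
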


\begin{proof}
The first part of statement (1) immediately follows from Lemma~\ref{ha001}.
By \cite[Proposition 1.12]{Dade1973}, we can see that $\overline{P}$ is primitive.
Statement (1) is completely proved.
Next, since $P$ is primitive in $\C S$, $\overline{P}$ is primitive in
the adjacency algebra over any extension field $E$ of $\mathbb{F}_q$,
and then $\overline{P}(ES)$ is a simple algebra.
Since any finite division ring is a field,
the Wedderburn theorem shows that
$\overline{P}(\F_qS)\cong \Mat_n(F)$ for some $n$ and some finite extension $F$ of~$\F_q$,
and $\overline{P}(FS)$
is also a simple algebra.
By the separability of $F$ over $\F_q$, we have
\begin{eqnarray*}
\overline{P}(FS) &\cong&
F\otimes_{\F_q}\overline{P}(\F_qS)\cong
F\otimes_{\F_q}\Mat_n(F)\cong \Mat_n(F\otimes_{\F_q}F)\\
&\cong& \Mat_n(|F:\F_q|F)\cong |F:\F_q|\Mat_n(F).
\end{eqnarray*}
Due to the simplicity of $\overline{P}(FS)$
we have $|F:\F_q|=1$ and hence $F=\F_q$.
This means that the irreducible representation defined by $\overline{P}$
is absolutely irreducible.

Besides, the ranks of the modules
$$K_\mathfrak{P}S=P(K_\mathfrak{P}S)\oplus(I-P)(K_\mathfrak{P}S),\quad
\F_qS=\overline{P}(\F_qS)\oplus (\overline{I}-\overline{P})(\F_qS)$$
are the same. Since obviously the ranks of $P(K_\mathfrak{P}S)$ and
$(I-P)(K_\mathfrak{P}S)$ do not exceed the ranks of
$\overline{P}(\F_qS)$ and $(\overline{I}-\overline{P})(\F_qS)$, respectively,
it follows that they are equal.
Thus the degrees of irreducible representations corresponding to $P$ and
$\overline{P}$ are the same. Also comparing the dimensions of the decompositions of standard
modules $K_\mathfrak{P}X$ and $\F_qX$, we see that the multiplicities of
irreducible representations in the standard representations
corresponding to $P$ and $\overline{P}$ are the same.
\end{proof}

\begin{lemma}\label{ha003}
Suppose $\F_qS$ is semisimple and the field $K$ contains all entries of
$P\in\prm(\C S)$. Then there exists $E\in \F_qS$ such that $\rk(E)=m_P$.
\end{lemma}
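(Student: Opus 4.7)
The plan is to produce $E$ as a primitive idempotent inside the Wedderburn component of $\F_qS$ cut out by $\overline{P}$, and then read off its rank from how it acts on the isotypic decomposition of the standard module $\F_qX$.

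First, I would invoke Lemma~\ref{ha002}: since $K$ contains the entries of $P$ and $\F_qS$ is semisimple, $\overline{P}$ is a central primitive idempotent of $\F_qS$, and $\overline{P}(\F_qS)\cong\Mat_n(\F_q)$ with $n=n_P$. Moreover, the multiplicity of the corresponding irreducible $\F_qS$-module $V$ (of dimension $n$) in the standard representation on $\F_qX$ equals $m_P$. Equivalently, as an $\F_qS$-module,
$$
\overline{P}(\F_qX)\cong V^{\oplus m_P}.
$$

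Next I would pick any primitive idempotent $e$ of the matrix algebra $\overline{P}(\F_qS)\cong\Mat_n(\F_q)$; concretely, any rank-$1$ idempotent of $\Mat_n(\F_q)$ pulled back through the isomorphism. Viewed inside $\F_qS\subseteq\Mat_X(\F_q)$, this is the candidate $E$. Since $\overline{P}$ is a central idempotent with $e\overline{P}=e$, the element $E$ annihilates every isotypic component of $\F_qX$ other than the $\overline{P}$-one. On the $\overline{P}$-isotypic component $V^{\oplus m_P}$, $E$ acts componentwise via the rank-$1$ endomorphism of $V$ corresponding to $e$ under $\Mat_n(\F_q)\cong\mathrm{End}_{\F_q}(V)$, hence has image of dimension $1\cdot m_P$.

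Finally I would identify the rank of $E$ as a matrix in $\Mat_X(\F_q)$ with $\dim_{\F_q}E(\F_qX)$, which by the previous paragraph equals $m_P$. This gives $\rk(E)=m_P$, as required. The only delicate point is the bookkeeping for the second step: one must be sure that \emph{the same} $n$ and $m_P$ appearing in the Wedderburn decomposition of $\F_qS$ match the characteristic-zero invariants $n_P,m_P$, but this is exactly what Lemma~\ref{ha002}(2) supplies, so there is no genuine obstacle beyond carefully citing that lemma.
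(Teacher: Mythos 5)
Your proposal is correct and follows essentially the same route as the paper: both take a rank-one (primitive) idempotent in the component $\overline{P}(\F_qS)\cong\Mat_{n_P}(\F_q)$ given by Lemma~\ref{ha002}(2) (the paper phrases it as a diagonal matrix unit) and compute its rank as $1\cdot m_P$ from the multiplicity of the corresponding irreducible in the standard module. Your version merely makes the isotypic-decomposition bookkeeping explicit, which the paper leaves implicit.
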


\begin{proof}
From Lemma~\ref{ha002} (2), we have $\overline{P}(\F_qS)\cong \Mat_{n_P}(\F_q)$
Choose  an element $E\in\overline{P}(\F_qS)$
corresponding to a diagonal matrix unit in $\Mat_{n_P}(\F_q)$.
Since the irreducible representation corresponding to $\overline{P}$ appears
$m_P$ times in the standard representation, we have that $\rk(E)=m_P$.
\end{proof}

Now we give a proof of Theorem \ref{160707a}.

\begin{proof}[Proof of Theorem~\ref{160707a}]
Suppose $p$ is not a divisor of the Frame number $\Fr(X,S)$.
Then the adjacency algebra of $(X,S)$ over a field of characteristic $p$ is semisimple
(see Appendix).
Since the field $K=\Q(X,S)$ satisfies to condition of Lemma \ref{ha003} for all $P\in\prm(\C S)$,
one can find a matrix $E\in \F_qS$ such that $\rk(E)=m_{\min}$. Since $m_{\min}>1$, we
see that $E\not\in\F_qJ$. By Theorem~\ref{180707b} and inequality (\ref{270707a}) we have
$$|X|\leq \frac{q^{m_P}-1}{q-1}=\frac{p^{m_Pf}-1}{p^f-1}
\leq \frac{p^{|K:\Q|m_P}-1}{p^{|K:\Q|}-1}$$
and we are done.
\end{proof}

\section*{Appendix : Association schemes}

Let $X$ be a finite set and $S$ a partition of~$X^2$ closed with respect to the transpose.
A pair $(X,S)$ is called an {\it associative scheme} or {\it scheme} if
the reflexive relation $\Delta$ belongs to the set $S$ and given $r,s,t\in S$, the number
\qtnl{240707a}
c_{r,s}^t=|\{z\in X:\,(x,z)\in r,\ (z,y)\in s\}|
\eqtn
does not depend on the choice of $(x,y)\in t$. The elements of $S$ and the number $|X|$ are
called the {\it basis relations} and the {\it order} of the scheme.
The set of unions of all subsets of $S$ is denoted by~$S^*$.
The number $d_r=c_{r,r^*}^\Delta$ where $r^*$ is the transpose of $r$, is called the {\it valency}
of~$r$. The scheme $(X,S)$ of order $\ge 2$ is called {\it primitive} if any equivalence relation on~$X$
belonging to $S^*$ coincides with either $\Delta$ or~$X^2$.\myskip

Given a field $F$ the linear span $FS$ of the set $\{A_s:\ s\in S\}$ forms a subalgebra
of the algebra~$\Mat_X(F)$ (see (\ref{240707a})). This subalgebra is called the {\it adjacency
algebra} of the scheme $(X,S)$ over $F$.
From the definition it follows that $FS$ is closed with respect
to the transpose and the Hadamard multiplication. In particular,
\qtnl{240707b}
a\in F,\ A\in F S\ \Rightarrow\ A^{(a)}\in F S
\eqtn
where $A^{(a)}$ is a \{0,1\}-matrix in $\Mat_X(F)$ such that $A^{(a)}_{x,y}=1$ if and only
if $A_{x,y}=a$. One can see that any \{0,1\}-matrix belonging to $FS$ is of the form
$A_s$ for some $s\in S^*$. The set of all central primitive idempotents of the algebra $FS$
is denoted by $\prm(FS)$.\myskip

The adjacency algebra~$\C S$ of the scheme $(X,S)$ over the complex number field $\C$
is semisimple. So by the Wedderburn
theorem its {\it standard module} $\C X$ is completely reducible. For an
irreducible submodule $L$ of $\C X$ corresponding to a central primitive idempotent~$P$ of the
algebra~$\C S$, we set
\qtnl{190907}
n_P=\dim_\C(L),\quad m_P=\rk(P)/n_P,
\eqtn
thus $m_P$ and $n_P$ are the
{\em multiplicity} and the {\em degree} of the corresponding irreducible representation of~$\C S$.
It is known that $m_P\geq n_P$ for all~$P$ \cite{EP97}.
Obviously, for the {\it principal} central primitive idempotent $P=(1/|X|)J$ of the algebra~$\C S$
we have $m_P=n_P=1$.\myskip

For an arbitrary field $F$,
the semisimplicity of the algebra $FS$ was studied in~\cite{H00}. It was proved that it
is semisimple if and only if the characteristic of the field $F$ does not divide the number
$$
\Fr(X,S)=|X|^{|S|}\frac{\prod_{r\in S}d_r}{\prod_{P\in\prm}m_P^{n_P^2}}
$$
where $\prm$ is the set of all non-principal central primitive idempotents
of the algebra~$\C S$. This number is
called the {\it Frame number} of the scheme~$(X,S)$.\myskip

A scheme $(X,S)$ is called {\it thin}, if $d_r=1$ for all $r\in S$. In this case there exists
a regular group $G\le\sym(X)$ such that $S$ coincides with the 2-orbits of~$G$, i.e. the
orbits of the componentwise action of~$G$ on the set~$X^2$. (In this case the sets $X$ and $G$ can
be naturally identified and the algebra $F S$ becomes the group algebra $F G$.) Exactly the
same construction produces a scheme $(X,S)$ for an arbitrary transitive group $G\le\sym(X)$.
One can prove that such a scheme is primitive if and only if the group $G$ is primitive.

\end{document}